\newtheorem{theorem}{Theorem}    % Standard theorem environment
\newtheorem{lemma}{Lemma}
\theoremstyle{definition}
\newtheorem{definition}[theorem]{Definition}
\newtheorem{example}[theorem]{Example}
\newtheorem*{remark*}{Remark}
\newcommand{\Z}{\mathbb{Z}}
\newcommand{\Q}{\mathbb{Q}}
\DeclareMathOperator{\Ker}{Ker}
\title[Bi-orderability obstruction for rationally homologically fibered knot]{Alexander polynomial obstruction of bi-orderability for rationally homologically fibered knot groups}
\author{Tetsuya Ito}
\address{Department of Mathematics, Graduate School of Science, Osaka University \\ 1-1 Machikaneyama Toyonaka, Osaka 560-0043, JAPAN}
\email{tetito@math.sci.osaka-u.ac.jp}
\subjclass[2010]{Primary~57M05, Secondary~20F60,06F15}
\urladdr{http://www.math.sci.osaka-u.ac.jp/~tetito/}
\keywords{bi-orderable group, Alexander polynomial}
\begin{document}

\begin{abstract}
We show that if the fundamental group of the complement of a rationally homologically fibered knot in a rational homology 3-sphere is bi-orderable, then its Alexander polynomial has at least one positive real root. Our argument can be applied for a finitely generated group which is an HNN extension with certain properties.
\end{abstract}

\maketitle

\section{Introduction}

A total ordering $\leq_{G}$ on a group $G$ is a \emph{bi-ordering} if $a \leq_{G} b$ implies both $ga \leq_{G} gb$ and $ag \leq_{G} bg$ for all $a,b,g \in G$. A group is called \emph{bi-orderable} if it admits a bi-ordering.

The Alexander polynomial provides a useful criterion for the (non) bi-orderbility. In \cite{cr}, Clay-Rolfsen proved that if the knot $\mathcal{K}$ is \emph{fibered} (actually their argument can be applied for a finitely generated group whose commutator subgroup is finitely generated), then its Alexander polynomial $\Delta_{\mathcal{K}}(t)$ has at least one positive real root. In \cite{cgw} Chiswell-Glass-Wilson showed the same result under the assumption that the group admits a certain two generators, one relator presentation. 

In this note we prove the following (non)-bi-orderability criterion for a rationally homologically fibered knot.

\begin{definition}\cite{gs}
A knot $\mathcal{K}$ in a rational homology 3-sphere $M$ is \emph{rationally homologically fibered} if $\deg \Delta_{\mathcal{K}}(t)=2g(\mathcal{K})$, where $g(\mathcal{K})$ denotes the genus of the knot $\mathcal{K}$. 
\end{definition}

\begin{theorem}
\label{theorem:main}
Let $\mathcal{K}$ be a rationally homologically fibered knot in a rational homology 3-sphere $M$. If the Alexander polynomial $\Delta_{\mathcal{K}}(t)$ has no positive real root, then the knot group $\pi_{1}(M \setminus \mathcal{K})$ is not bi-orderable.
\end{theorem}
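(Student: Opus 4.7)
The plan is to follow the Clay--Rolfsen strategy, adapting it from the free-by-cyclic setting to the HNN extension associated to a Seifert surface. Cutting $M \setminus \mathcal{K}$ along a minimal genus Seifert surface $\Sigma$ yields a compact $3$-manifold $N$ whose boundary contains two copies $\Sigma_+, \Sigma_-$ of $\Sigma$; van Kampen then presents
\[
G := \pi_1(M \setminus \mathcal{K}) = \langle \pi_1(N),\, t \mid t\, i_-(\gamma)\, t^{-1} = i_+(\gamma),\ \gamma \in \pi_1(\Sigma) \rangle
\]
as an HNN extension with stable letter $t$, where $i_\pm : \pi_1(\Sigma) \to \pi_1(N)$ are the boundary inclusions. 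The abelianization $G \twoheadrightarrow \Z$ sends $t \mapsto 1$, and its kernel $C$ is $\pi_1$ of the infinite cyclic cover $\widetilde X$; thus $A := C^{\mathrm{ab}} = H_1(\widetilde X; \Z)$ is the Alexander module, with $t$-action given by the deck transformation.

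The rationally homologically fibered hypothesis, which is equivalent to $\det V \neq 0$ for a Seifert matrix $V$ of $\Sigma$, forces both $(i_\pm)_* : H_1(\Sigma; \Q) \to H_1(N; \Q)$ to be isomorphisms. From the Mayer--Vietoris presentation of the Alexander module as the cokernel of $t(i_+)_* - (i_-)_*$ on $H_1(\Sigma; \Q) \otimes \Q[t^{\pm 1}] \to H_1(N; \Q) \otimes \Q[t^{\pm 1}]$, one sees that $W := A \otimes \Q$ is a $\Q$-vector space of dimension $2g$ on which the characteristic polynomial of the $t$-action equals $\Delta_{\mathcal{K}}(t)$ up to a unit in $\Q[t^{\pm 1}]$.

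Now assume that $G$ admits a bi-ordering $\leq_G$. The goal is to produce, from this data, a $t$-invariant closed convex cone $\overline P \subset W \otimes \R \cong \R^{2g}$ satisfying $\overline P \cup -\overline P = \R^{2g}$. A short linear-algebra argument, passing to the quotient by the subspace $\overline P \cap -\overline P$ (a pointed closed convex cone $C \subset \R^k$ with $C \cup -C = \R^k$ forces $k \leq 1$, in which case $C = \R_{\geq 0}$), then identifies $\overline P$ as a closed half-space $\{\ell \geq 0\}$ for some nonzero linear functional $\ell$. Since $t$ preserves $\overline P$ and is invertible, it preserves the bounding hyperplane $\ker(\ell)$ and acts on the one-dimensional quotient $\R^{2g}/\ker(\ell)$ by multiplication by some $\lambda > 0$; this $\lambda$ is a positive real eigenvalue of $t$ on $W$, hence a positive real root of $\Delta_{\mathcal{K}}(t)$, contradicting the hypothesis.

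The main obstacle is precisely the descent step producing $\overline P$: a priori, cosets of $[C, C]$ in $C$ may contain elements of both signs with respect to $\leq_G$, so the bi-ordering on $G$ does not automatically push forward to $A$, let alone to $W \otimes \R$. One approach is to show that $[C, C]$ is relatively convex in $G$, so that the metabelianization $G/[C, C]$ inherits a bi-ordering which then restricts to the desired $t$-invariant bi-ordering on $A$; an alternative is to extract the $t$-invariant half-space in $W \otimes \R$ directly via a Hahn--Banach type argument applied to the image of $\{g \in C : g \geq_G 1\}$ under $C \to A \to W \otimes \R$. Once the invariant half-space has been produced, the remaining Perron-type linear algebra is routine, and the argument visibly depends only on the HNN structure together with the dimension-$2g$ rational Alexander module, matching the abstract's claim of applicability to suitable finitely generated HNN extensions.
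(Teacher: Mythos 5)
Your proposal correctly sets up the HNN decomposition and correctly identifies that the rationally homologically fibered hypothesis makes $(i_\pm)_*$ rational isomorphisms and the rational Alexander module a $2g$-dimensional vector space. But the argument has a genuine gap, and you have in fact named it yourself: the ``descent step'' producing a $t$-invariant proper convex cone in $W \otimes \R$ from the bi-ordering of $G$ is exactly the point where the Clay--Rolfsen strategy breaks down outside the fibered case, and neither of your two suggested fixes is carried out or obviously workable. Clay--Rolfsen's descent uses that the commutator subgroup $C = \Ker\phi$ is \emph{finitely generated} (true for fibered knots), which guarantees a proper convex normal subgroup of $C$ with nontrivial torsion-free abelian quotient carrying an induced ordering. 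Here $C$ is not finitely generated, $[C,C]$ need not be relatively convex, and the image of the positive cone of $(C,\leq_G)$ in $W\otimes\R$ has no a priori reason to be contained in a half-space; a Hahn--Banach separation requires precisely the kind of disjointness you would need to establish. So the central step of the proof is missing, not merely routine.

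The paper circumvents this obstacle by a genuinely different route that never orders the Alexander module at all. It applies the convex-subgroup trick only to the \emph{finitely generated} group $K_N = \pi_1(Y_{[-N,N]})$ (a finite union of copies of the cut-open manifold inside the infinite cyclic cover), obtaining a bi-ordered nontrivial torsion-free abelian quotient $A_N$ of $K_N$. The role of the ``no positive real root'' hypothesis is played not by an eigenvalue/Perron argument but by a theorem of Dubickas: $\Delta_{\mathcal K}(t)$ has no positive real root iff some multiple $\Delta'(t)\Delta_{\mathcal K}(t)=\sum_i a_i t^i$ has all nonzero coefficients positive. Then for a generator $x$ of $K_0$ that is positive in $A_N$ (such $x$ exists because, by the rationally homologically fibered condition, $K_0 \to A_N$ is surjective), the element $\bigl(\prod_{i\geq 0} s^{-i}x^{a_i}s^{i}\bigr)^{r}$ is simultaneously strictly positive in $A_N$ (since all $a_i \geq 0$, $a_0>0$, and the order is bi-invariant) and trivial in $A_N$ (since it lies in $[K_N,K_N]$ because $\Delta'\Delta_{\mathcal K}$ annihilates the rational Alexander module). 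If you want to salvage your approach you would need to supply the missing relative-convexity or separation argument, which is the hard part; as written, the proposal is a correct reduction to an unproved and nontrivial claim.
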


Although not all knots are rationally homologically fibered, compared with fibered knots the class of rationally homologically fibered knots are much larger. For example, the alternating knots (in $S^{3}$) are rationally homologically fibered \cite{cro,mu}, and all knots with less than or equal to 11 crossings are rationally homologically fibered, except $11_{n34}$,$11_{n42}$,$11_{n45}$, $11_{n67}$,$11_{n73}$, $11_{n07}$, $11_{n152}$ (in the table Knotinfo \cite{cl}).

\begin{example}
An alternating knot $\mathcal{K}=11a_1$ has the Alexander polynomial $\Delta_{\mathcal{K}}(t)=2-12t+ 30t^2-39t^3+ 30t^4-12t^5+ 2t^6$ which has no positive real root. Thus the fundamental group of its complement is not bi-orderable. ($\mathcal{K}$ is not fibered and \cite{cdn} fails to find a presentation that satisfies the assumption of Chiswell-Glass-Wilson's criterion so they could not detect the non-bi-orderability)
\end{example}

Our argument relies on the rationally homologically fibered condition which in particular forces the Alexander polynomial to be non-trivial. Thus it is interesting to ask whether $\pi_{1}(M \setminus \mathcal{K})$ bi-orderable or not when $\Delta_{\mathcal{K}}(t)=1$.

\section{proof of Theorem}
\label{section:proof}

Let $X= M \setminus \mathcal{K}$ be the knot complement and $G= \pi_{1}(M \setminus \mathcal{K})$ be the knot group.
Let $\pi:\widetilde{X} \rightarrow X$ be the infinite cyclic covering of $X$ which corresponds to the kernel of the abelianization map $\phi: G \rightarrow \Z=\langle t \rangle$. 

Then the homology group of infinite cyclic covering $H_{1}(\widetilde{X};\Q)$ has a structure of $\Q[t,t^{-1}]$ module, where $t$ acts on $\widetilde{X}$ as a deck translation. There exists $p_1(t),\ldots,p_{n}(t) \in \Q[t,t^{-1}]$ and $f \in \Z_{\geq 0}$ such that
\[ H_{1}(\widetilde{X};\Q) \cong \Q[t,t^{-1}]^{f} \oplus \bigoplus_{i=1}^{n} \Q[t,t^{-1}] \slash (p_{i}(t)). \]
The Alexander polynomial $\Delta_{\mathcal{K}}(t)$ is defined by 
\[ \Delta_{\mathcal{K}}(t) = 
\begin{cases} 
p_{1}(t)p_2(t)\cdots p_{n}(t) & (f=0) \\
0 & (f>0).
\end{cases}
\]
Thus $\Delta_{\mathcal{K}}(t) \cdot h =0$ for every $h \in H_{1}(\widetilde{X};\Q)$.

Let $\Sigma$ be a minimum genus Seifert surface of $\mathcal{K}$, and let $Y = M\setminus N(\Sigma)$, where $N(\Sigma)\cong \Sigma \times (-1,1)$ denotes a regular neighborhood of $\Sigma$.

Let $\iota^{\pm}: \Sigma \hookrightarrow \Sigma\times\{\pm 1\} \subset Y$ denotes the inclusion maps. As is well-known, the infinite cyclic covering $\widetilde{X}$ is obtained by gluing infinitely many copies $\{Y_i\}_{i \in \Z}$ of $Y$, where the $i$-th copy $Y_{i}$ and the $(i+1)$-st copy $Y_{i+1}$ are glued by identifying $\iota^{-}(\Sigma) \subset Y_i$ and $\iota^{+}(\Sigma) \subset Y_{i+1}$. 
In the rest of argument, we will always take a base point of $\widetilde{X}$ so that it lies in $Y_{0}$.

For $N\geq 0$, let $Y_{[-N,N]}= \bigcup_{i=-N}^{N} Y_{i} \subset \widetilde{X}$, and let $i_N: Y_0 \hookrightarrow Y_{[-N,N]}$ and $j_N: Y_{[-N,N]} \hookrightarrow \widetilde{X}$ be the inclusion maps. 
We denote the fundamental group $\pi_{1}(Y_{[-N,N]})$ and $\pi_{1}(\widetilde{X}) =\Ker \phi$ by $K_{N}$ and $K$, respectively. Since $Y_{[-N,N]}$ is compact, $K_N$ is finitely generated 
 
Since $\iota^{\pm}_{*}: \pi_{1}(\Sigma) \rightarrow \pi_{1}(X)$ are injective, by van-Kampen theorem it follows that both $(i_{N})_{*}: K_{0} \rightarrow K_N$ and $(j_{N})_{*}: K_{N} \rightarrow K$ are injective. By these inclusion maps we will always regard $K_0$ as a subgroup of $K_{N}$, and $K_N$ as a subgroup of $K$. For $x \in K_0$, we will often write $(i_{N})_{*}(x) \in K_N$ simply by the same symbol $x$, by abuse of notation.

\begin{proof}[Proof of Theorem \ref{theorem:main}]

Assume that $\mathcal{K}$ is rationally homologically fibered, and the Alexander polynomial $\Delta_{\mathcal{K}}(t)$ has no positive real root.

A Theorem of Dubickas \cite{dub} says that a one-variable polynomial $f(t) \in \Q[t,t^{-1}]$ has no positive real root, if and only if there is a non-zero polynomial $g(t) \in \Q[t,t^{-1}]$ such that all the non-zero coefficients of $g(t)f(t)$ are positive. Thus there is a non-zero polynomial $\Delta'(t)$ such that all the non-zero coefficient of $\Delta'(t)\Delta_{\mathcal{K}}(t)$ are positive. We take such $\Delta'(t)$ so that
$\Delta'(t)\Delta_\mathcal{K}(t)=\sum_{i \geq 0} a_{i}t^{i}$ with $a_{0}>0$ and $a_{i}\geq 0$ ($i>0$).

For $x \in K=\pi_{1}(\widetilde{X})$, we denote by $[x] \in H_{1}(\widetilde{X};\Q)$ the homology class represented by $x$. 
Then $[x]=0$ if and only if $x^{r} \in [K,K]$ for some $r>0$.

Let $s \in \pi_{1}(X)$ be an element represented by a meridian of the knot $\mathcal{K}$. Then $t^{i}[x] = [s^{-i}xs^{i}]$.  By definition of the Alexander polynomial, for each $x \in K$ 
\[ \Delta'(t)\Delta_{\mathcal{K}}(t)[x]= \sum_{i\geq 0} a_{i} t^{i}[x] = \sum_{i \geq 0} [s^{-i}x^{a_i}s^{i}] = \left[\prod_{i\geq 0} (s^{-i}x^{a_i}s^{i}) \right] = 0\in H_{1}(\widetilde{X};\Q).\]
This implies that there is $r(x)>0$ such that 
\[ \left( \prod_{i\geq0} (s^{-i}x^{a_i}s^{i}) \right)^{r(x)} \in  [K,K]. \]
Moreover, since $K= \bigcup_{n\geq 0} K_n $, there is $N(x) \in \Z$ such that 
\[  \left( \prod_{i\geq0} (s^{-i}x^{a_i}s^{i}) \right)^{r(x)}  \in [K_{N(x)},K_{N(x)}]. \]

Take a finite symmetric generating set $\mathcal{X}$ of $K_0$. Here symmetric we mean that $x \in \mathcal{X}$ implies $x^{-1} \in \mathcal{X}$.
Let $N = \max\{N(x) \: | \: x \in \mathcal{X}\}$, and let $r$ be the least common multiple of $r(x)$ for $x \in \mathcal{X}$.
Then for every $x \in \mathcal{X}$ we have
\begin{equation}
\label{eqn:cc}  \left( \prod_{i\geq0} (s^{-i}x^{a_i}s^{i}) \right)^{r}  \in [K_{N},K_{N}].
\end{equation}

Now assume to the contrary that, $G$ is bi-orderable. Let $<_{K_N}$ be a bi-ordering on $K_N$ which is the restriction of a bi-ordering of $G$.
Since $K_{N}$ is finitely generated, there is a $<_{K_N}$ convex normal subgroup $C$ of $K_{N}$ such that the quotient group $A_N := K_{N} \slash C$ is a non-trivial, torsion-free abelian group. Then $A_{N}$ has the bi-ordering $<_{A_N}$ coming from $<_{K_{N}}$; $a <_{A_{N}} a'$ if and only if $a=P(k)$, $a'=P(k')$ ($k,k' \in K_{N}$) with $k <_{K_N} k'$, where $P: K_{N} \rightarrow A_N$ denotes the quotient map (see \cite{cr,i} for details on abelian, bi-ordered quotients).

\begin{lemma}
\label{lemma:Qrhf}
Let $q = P\circ (i_N)_{*}: K_0 \stackrel{(i_N)_{*}}{\longrightarrow} K_N \stackrel{P}{\longrightarrow} A_N $.
If both $(\iota^{\pm})_{*}: H_{1}(\Sigma;\Q) \rightarrow H_{1}(Y;\Q)$ are surjections, then $q$ is a surjection.
\end{lemma}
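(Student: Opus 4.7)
The plan is to combine an iterated Mayer--Vietoris calculation in $\Q$-homology with the torsion-freeness and bi-orderability of $A_N$.

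First, I would prove by induction on $N$ that the inclusion $(i_N)_*\colon H_1(Y_0;\Q)\to H_1(Y_{[-N,N]};\Q)$ is surjective. For the inductive step, Mayer--Vietoris for $Y_{[-N,N]}=Y_{[-N,N-1]}\cup Y_N$ (glued along the identification of $\iota^-(\Sigma)\subset Y_{N-1}$ with $\iota^+(\Sigma)\subset Y_N$) gives the exact sequence
\[
H_1(\Sigma;\Q)\xrightarrow{\beta}H_1(Y_{[-N,N-1]};\Q)\oplus H_1(Y_N;\Q)\to H_1(Y_{[-N,N]};\Q)\to 0,
\]
where $\beta(\sigma)=((\iota^-)_*\sigma,-(\iota^+)_*\sigma)$. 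The surjectivity of $(\iota^+)_*$ lets me write any $y\in H_1(Y_N;\Q)$ as $-(\iota^+)_*\sigma$; then $(0,y)\equiv((\iota^-)_*\sigma,0)$ in $\mathrm{coker}\,\beta$, absorbing $H_1(Y_N;\Q)$ into the $H_1(Y_{[-N,N-1]};\Q)$-summand. Symmetrically, using $(\iota^-)_*$ handles $Y_{-N}$, and the inductive hypothesis closes the loop.

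Second, since $A_N$ is abelian, $P\colon K_N\to A_N$ factors through the abelianization $K_N^{\mathrm{ab}}=H_1(Y_{[-N,N]};\Z)$. Step one implies $\mathrm{coker}((i_N)_*^{\mathrm{ab}})$ is a torsion abelian group (its tensor with $\Q$ vanishes), and hence so is $A_N/q(K_0)$. Since $A_N$ is finitely generated and torsion-free, this means $q(K_0)$ is a finite-index subgroup of $A_N$.

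The hard part is promoting finite index to equality: torsion-freeness of $A_N$ does not by itself forbid $A_N/q(K_0)$ from being a non-trivial finite quotient such as $\Z/m\Z$. Here I would invoke the bi-ordered structure: taking $C$ via the Clay--Rolfsen ``convex jump'' construction, $A_N$ is Archimedean bi-ordered, and the task is to show that the preimage $K_0\cdot C\subset K_N$ is a convex subgroup, so that its image $q(K_0)\subset A_N$ is convex, hence pure, and therefore (being finite-index in a finitely generated torsion-free abelian group) equal to $A_N$. Verifying this compatibility between $C$ and the amalgamated-product structure of $K_N$---or equivalently, directly replacing $A_N$ by $q(K_0)$, which remains a non-trivial torsion-free bi-ordered abelian quotient---is the main obstacle.
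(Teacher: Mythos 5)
Your first two steps reproduce the paper's own argument: the paper likewise deduces surjectivity of $(i_N)_*\colon H_1(Y_0;\Q)\to H_1(Y_{[-N,N]};\Q)$ from Mayer--Vietoris (without writing out the induction you give) and then factors $q$ as $K_0\to H_1(Y_0;\Z)\to H_1(Y_{[-N,N]};\Z)=K_N/[K_N,K_N]\to A_N$, so that the cokernel of $q$ is a torsion group. The obstacle you flag in your third step is genuine, and it is precisely the step the paper elides: the paper concludes with ``all maps are surjections modulo torsion elements and $A_N$ is torsion-free, so $q$ is a surjection,'' but a torsion cokernel inside a torsion-free group need not vanish (the inclusion $2\Z\subset\Z$ is the standard counterexample), so neither your write-up nor the paper's actually establishes surjectivity of $q$ on the nose. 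Your proposed repair via convexity of $K_0\cdot C$ in $K_N$ is left unverified, and since $C$ is only required to be \emph{some} convex normal subgroup with non-trivial torsion-free abelian quotient, I do not see that it can be pushed through in general.

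The resolution is that nothing downstream requires surjectivity. In the proof of Theorem \ref{theorem:main}, Lemma \ref{lemma:Qrhf} is invoked only to produce one element $x$ of the symmetric generating set $\mathcal{X}$ with $1<_{A_N}q(x)$. For that it suffices that $q(K_0)$ be non-trivial: then some generator satisfies $q(x)\neq 1$, and symmetry of $\mathcal{X}$ lets you choose the sign. Non-triviality is immediate from what you have already proved: if $q(K_0)$ were trivial then $A_N=A_N/q(K_0)$ would be torsion, contradicting that $A_N$ is non-trivial and torsion-free (equivalently, a finite-index subgroup of $A_N\cong\Z^k$, $k\geq 1$, cannot be trivial). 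So the correct fix is to weaken the conclusion of the lemma to ``$q$ has non-trivial (indeed finite-index) image,'' which your first two steps establish completely; the convexity detour you identify as the hard part can simply be dropped.
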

\begin{proof}
By Meyer-Vietoris sequence, the surjectivity of $(\iota^{\pm})_{*}$ shows the surjectivity of $(i_{N})_{*}:H_{1}(Y_{0};\Q) \rightarrow H_{1}(Y_{[-N,N]};\Q)$. Thus $(i_{N})_{*}:H_{1}(Y_{0};\Z) \rightarrow H_{1}(Y_{[-N,N]};\Z)$ is a surjection modulo torsion elements.

On the other hand, $A_N$ is an abelian group so the map $q$ is written as compositions 
\[  K_0=\pi_{1}(Y_0) \rightarrow H_{1}(Y_0;\Z) \stackrel{(i_{N})_{*}}{\longrightarrow} H_{1}(Y_{[-N,N]};\Z) = K_N \slash [K_N,K_N] \rightarrow K_N \slash C = A_{N}.\]
All maps are surjection modulo torsion elements and $A_{N}$ is torsion-free so $q$ is a surjection.
\end{proof}

The following lemma clarifies a role of the rationally homologically fibered assumption (cf. \cite[Proposition 2]{gs}). 

\begin{lemma}
\label{lemma:Qrhf1}
Both $(\iota^{\pm})_{*}: H_{1}(\Sigma;\Q) \rightarrow H_{1}(Y;\Q)$ are surjection if and only if $\mathcal{K}$ is rationally homologically fibered.
\end{lemma}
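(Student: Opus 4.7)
My plan is to compute $H_1(\widetilde{X};\Q)$ via a Mayer--Vietoris argument for the decomposition $\widetilde{X} = \bigcup_i Y_i$, identify $\Delta_{\mathcal{K}}(t)$ as $\det(V^- - tV^+)$ up to units, and read off the degree condition. Throughout I write $g = g(\mathcal{K})$, $\Lambda = \Q[t,t^{-1}]$, and $V^\pm$ for matrix representations of $\iota^\pm_* : H_1(\Sigma;\Q) \to H_1(Y;\Q)$ in chosen bases.

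The crucial preliminary step is to establish $\dim_\Q H_1(Y;\Q) = 2g$. The boundary $\partial Y = \partial N(\Sigma)$ is the double of $\Sigma$ along $\partial \Sigma = \mathcal{K}$, a closed orientable surface of genus $2g$, so $\dim H_1(\partial Y;\Q) = 4g$. The lower bound $\dim H_1(Y;\Q) \geq 2g$ is exactly ``half lives, half dies'' for the compact orientable $3$-manifold $Y$. For the matching upper bound, I apply Mayer--Vietoris to $M = Y \cup N(\Sigma)$ with intersection homotopy equivalent to $\partial Y$ and use $H_1(M;\Q) = 0$ from the $\Q$-homology sphere hypothesis; this forces surjectivity of the map $H_1(\partial Y;\Q) \to H_1(Y;\Q) \oplus H_1(\Sigma;\Q)$, so $\dim H_1(Y;\Q) + 2g \leq 4g$ and hence $\dim H_1(Y;\Q) \leq 2g$.

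Next, I apply Mayer--Vietoris to $\widetilde{X} = \bigcup_i Y_i$, packaging the indexed direct sums as free $\Lambda$-modules (with $t$ acting by deck translation). Tracking the connecting homomorphism using $\iota^-_*$ on the $Y_i$-side and $\iota^+_*$ on the $Y_{i+1}$-side yields
\[
H_2(\widetilde{X};\Q) \longrightarrow \Lambda^{2g} \xrightarrow{\;V^- - tV^+\;} \Lambda^{2g} \longrightarrow H_1(\widetilde{X};\Q) \longrightarrow \Lambda \xrightarrow{\;1-t\;} \Lambda.
\]
Injectivity of $1-t$ on $\Lambda$ forces the map out of $H_1(\widetilde{X};\Q)$ to vanish, so $H_1(\widetilde{X};\Q) \cong \operatorname{coker}(V^- - tV^+)$. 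Consequently $\Delta_{\mathcal{K}}(t)$ agrees with $\det(V^- - tV^+)$ up to a unit $\pm t^k$ of $\Lambda$.

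The lemma then reduces to a quick determinant analysis: viewed as a polynomial in $t$, $\det(V^- - tV^+)$ has constant term $\det V^-$ and coefficient of $t^{2g}$ equal to $\det V^+$. Hence $\deg \Delta_{\mathcal{K}}(t) = 2g$ iff both determinants are nonzero iff $V^\pm$ are both invertible $2g \times 2g$ matrices, which (since $H_1(\Sigma;\Q)$ and $H_1(Y;\Q)$ both have dimension $2g$) is equivalent to both $\iota^\pm_*$ being surjective. I expect the main obstacle to be the preliminary dimension count for $H_1(Y;\Q)$, where the $\Q$-homology sphere hypothesis is essential; once that is in hand, identifying the Mayer--Vietoris differential as $V^- - tV^+$ and reading off the degree are routine.
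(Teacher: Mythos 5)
Your proof is correct and follows essentially the same route as the paper: a dimension count giving $\dim H_1(Y;\Q)=2g$, the Mayer--Vietoris presentation $H_1(\widetilde{X};\Q)\cong\operatorname{coker}(V^- - tV^+)$, and the observation that $\deg\Delta_{\mathcal{K}}=2g$ forces both the constant and top coefficients, i.e. $\det V^\pm$, to be nonzero. The only (harmless) divergence is that you obtain the dimension count from ``half lives, half dies'' plus Mayer--Vietoris for $M=Y\cup N(\Sigma)$, whereas the paper invokes Alexander duality directly.
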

\begin{proof}
Let $g$ be the genus of $\mathcal{K}$. By Alexander duality, $\dim H_{1}(\Sigma;\Q) = \dim H_{1}(Y;\Q)=2g$. This shows that $(\iota^{\pm})_{*}$ are surjection if and only if $(\iota^{\pm})_{*}$ are isomorphism, that is, they are invertible.

By Meyer-Vietoris sequence, $H_{1}(\widetilde{X};\Q)$  is written as
\[ H_{1}(\widetilde{X};\Q) = \Q[t,t^{-1}] \slash \{ t(\iota^{+})_{*}(h) = (\iota^{-})_{*}(h) \ \ \  \forall h \in H_{1}(\Sigma)\} \]
Thus $\Delta_{\mathcal{K}}(t)$ is equal to the determinant of
$t(\iota^{+})_{*}-(\iota^{-})_{*}: \Q^{2g}= H_{1}(\Sigma;\Q) \rightarrow H_{1}(Y;\Q) \cong \Q^{2g}$. 

If $(\iota^{\pm})_{*}$ are surjective, then they are invertible so
$\Delta_{\mathcal{K}}(t) = \det(t- (\iota^{+})^{-1}_{*}(\iota^{-})_{*}) \det (\iota^{+})$. Since $(\iota^{+})^{-1}_{*}(\iota^{-})_{*}$ is invertible, $\deg \Delta_{\mathcal{K}}(t)= 2g$. Conversely, if $\deg \Delta_{\mathcal{K}}(t)= 2g$ then $\Delta_{\mathcal{K}}(0) = \det((\iota^{-})_{*}) = \det((\iota^{+})_{*})\neq 0$ so both $(\iota^{\pm})_{*}$ are invertible.

\end{proof}

Now we are ready to complete the proof of Theorem.

By Lemma \ref{lemma:Qrhf} and Lemma \ref{lemma:Qrhf1}, if $\mathcal{K}$ is rationally homologically fibered, then $q$ is surjective. Since $\mathcal{X}$ is a symmetric generating set, the surjectivity of $q$ implies that there exists $x \in \mathcal{X}$ such that $1 <_{A_N} q(x)$. By definition of the quotient ordering $<_{A_N}$, $1 <_{K_N} x$. 
The ordering $<_{K_N}$ is the restriction of a bi-ordering of $G$ and $0 \leq a_{i}$ so $1 \leq_{K_N} s^{-i}x^{a_i}s^{i}$. Therefore $1 \leq_{A_N} P(s^{-i}x^{a_i}s^{i})$ for all $i\geq 0$. Since $a_{0}>0$, as a consequence we get 
\[1 <_{A_N} q(x) \leq_{A_N} P \left( \prod_{i\geq 0}(s^{-i}x^{a_i}s^{i}) \right)^{r}. \]

On the other hand, $[K_N,K_N] \subset C $ so (\ref{eqn:cc}) implies 
\[ P\left( \prod_{i\geq 0}(s^{-i}x^{a_i}s^{i})\right)^{r} = 1 \in K_{N} \slash C =A_N.\]
This is a contradiction.

\end{proof}

We state and prove our main theorem for the case that the group is the fundamental group of a knot complement. However, our proof can be applied for finitely generated group represented by a certain HNN extension.

For a finitely generated group $G$ and a surjection $\phi:G \rightarrow \Z=\langle t\rangle$, $H_{1}(\Ker \phi;\Q)$ has a structure of finitely generated $\Q[t,t^{-1}]$-module and the Alexander polynomial $\Delta_{G}^{\phi}(t)$ (with respect to $\phi$) is defined similarly, and have the same property that $\Delta_{G}^{\phi}(t) \cdot h =0 $ for all $ h \in H_{1}(\Ker \phi;\Q)$.

In the proof of Theorem \ref{theorem:main}, besides the assumption that the Alexander polynomial has no positive real roots,
 what we really needed and used can be stated in terms of the groups $\Ker \phi$, $\pi_{1}(\Sigma)$ and $\pi_{1}(Y)$: we used the amalgamated product decomposition
\begin{equation}
\label{eqn:amalgam}
\Ker \phi = \pi_{1}(\widetilde{X}) = \cdots \ast_{\pi_{1}(\Sigma)} \pi_{1}(Y) \ast_{\pi_{1}(\Sigma)} \pi_{1}(Y) \ast_{\pi_{1}(\Sigma)} \pi_{1}(Y) \ast_{\pi_{1}(\Sigma)} \cdots
\end{equation}
having the properties 
\begin{equation}
\label{eqn:fingen}
\pi_{1}(Y) \mbox{ is finitely generated.} 
\end{equation}
\begin{equation}
\mbox{The inclusion } \iota^{\pm}_{*}: \pi_{1}(\Sigma) \rightarrow \pi_1(Y) \mbox{ induce surjections }
\label{eqn:hqsurj}\iota^{\pm}_{*}: H_{1}(\pi_{1}(\Sigma);\Q) \rightarrow H_{1}(\pi_{1}(Y);\Q).
\end{equation}
Note that we used the topological assumption that $\mathcal{K}$ is a rationally homologically fibered knot in a rational homology sphere $M$ only at Lemma \ref{lemma:Qrhf1}, which is used to show the property (\ref{eqn:hqsurj}).

In a language of group theory, the amalgamated product decomposition (\ref{eqn:amalgam}) comes from an expression of $\pi_{1}(M \setminus \mathcal{K})$ as an HNN extension
\[ \pi_{1}(M \setminus \mathcal{K}) = \ast_{\pi_{1}(\Sigma)} \pi_{1}(Y) = \langle t, \pi_{1}(Y)\: |  \: t^{-1}\iota^{+}(s)t= \iota^{-}(s) \ \ (\forall s \in \pi_{1}(\Sigma)) \rangle. \]

In summary, our proof of Theorem \ref{theorem:main} acutally shows the following non-bi-orderbility criterion.

\begin{theorem}\label{theorem:main-gr}
Let $H$ be a finitely generated group and $A$ be a group (not necessarily a finitely generated). Let $\iota^{\pm} : A \rightarrow H$ 
be homomoprhisms such that
\[ (\iota^{\pm})_{*}: H_{1}(A;\Q) \rightarrow H_{1}(H;\Q) \]
are surjective.
Let $G$ be a finitely generated group given by an HNN extension
\[ G =\ast_{A} H = \langle t, H \: |  \: t^{-1}\iota^{+}(a)t= \iota^{-}(a) \  \ (\forall a \in A) \rangle. \]

Let $\phi:G \rightarrow \Z$ is a surjection given by $\phi(t)=1$, $\phi(h)=0$ for all $h \in H$. If the Alexander polynomial $\Delta_{G}^{\phi}(t)$ has no positive real root, then $G$ is not bi-orderable. 
\end{theorem}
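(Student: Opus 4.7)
The plan is to replay the proof of Theorem~\ref{theorem:main}, substituting the topological ingredients by their group-theoretic counterparts supplied by the hypotheses. Set $K = \Ker \phi$. By Bass--Serre theory for the HNN extension, $K$ decomposes as a bi-infinite amalgamated product
\[ K = \cdots *_{A} H_{-1} *_{A} H_{0} *_{A} H_{1} *_{A} \cdots, \]
where $H_{i} = t^{-i} H t^{i}$ and the edge inclusions are the appropriate translates of $\iota^{\pm}$. Let $K_{N}$ be the subgroup generated by $H_{-N}, \ldots, H_{N}$; this plays the role of $\pi_{1}(Y_{[-N,N]})$ in the original argument. The standard injectivity in amalgamated products gives embeddings $K_{0} \hookrightarrow K_{N} \hookrightarrow K$, and each $K_{N}$ is finitely generated because $H$ is.

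Next I apply Dubickas's theorem verbatim: pick $\Delta'(t)$ so that $\Delta'(t)\Delta_{G}^{\phi}(t) = \sum_{i \geq 0} a_{i} t^{i}$ with $a_{0} > 0$ and $a_{i} \geq 0$. The $\Q[t, t^{-1}]$-action on $H_{1}(K; \Q)$ is $t \cdot [x] = [t^{-1} x t]$, and by hypothesis $\Delta_{G}^{\phi}(t)$ annihilates this module. For each $x$ in a finite symmetric generating set $\mathcal{X}$ of $K_{0} = H$, this produces $r(x) > 0$ and $N(x) \geq 0$ with
\[ \Bigl(\prod_{i \geq 0}(t^{-i} x^{a_{i}} t^{i})\Bigr)^{r(x)} \in [K_{N(x)}, K_{N(x)}]. \]
Setting $N = \max_{x} N(x)$ and $r = \mathrm{lcm}_{x} r(x)$ yields a single $N$ and $r$ that work uniformly over $\mathcal{X}$.

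The one substantive adjustment is Lemma~\ref{lemma:Qrhf}, whose original proof invoked Mayer--Vietoris for spaces; in the abstract setting I replace it by Mayer--Vietoris for group homology of amalgamated products. Namely, for $G_{1} *_{A} G_{2}$ with both $(\iota^{\pm})_{*}: H_{1}(A; \Q) \to H_{1}(G_{i}; \Q)$ surjective, a short diagram chase on the exact sequence $H_{1}(A;\Q) \to H_{1}(G_{1};\Q) \oplus H_{1}(G_{2};\Q) \to H_{1}(G_{1} *_{A} G_{2};\Q) \to 0$ shows that the inclusion $H_{1}(G_{i};\Q) \to H_{1}(G_{1} *_{A} G_{2};\Q)$ is surjective. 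Iterating along the chain defining $K_{N}$, the map $H_{1}(H; \Q) \to H_{1}(K_{N}; \Q)$ is surjective. The remainder of Lemma~\ref{lemma:Qrhf} (deducing surjectivity of the composition $q: K_{0} \hookrightarrow K_{N} \to A_{N}$ onto the torsion-free abelian quotient $A_{N} = K_{N}/C$) is purely algebraic and goes through verbatim.

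Finally, suppose for contradiction that $G$ is bi-orderable; restrict the bi-ordering to $K_{N}$ and pick a convex normal subgroup $C \subset K_{N}$ so that $A_{N} = K_{N}/C$ is a nontrivial torsion-free abelian bi-ordered group. By surjectivity of $q$ and symmetry of $\mathcal{X}$, some $x \in \mathcal{X}$ satisfies $1 <_{A_{N}} q(x)$; since $a_{0} > 0$ and $a_{i} \geq 0$, the bi-ordering forces
\[ 1 <_{A_{N}} P\Bigl(\prod_{i \geq 0}(t^{-i} x^{a_{i}} t^{i})\Bigr)^{r}, \]
contradicting $[K_{N}, K_{N}] \subset C$. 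I expect the main obstacle to be purely cosmetic: identifying $\Ker \phi$ with the claimed Bass--Serre amalgamated product and setting up the iterated group-homology Mayer--Vietoris argument for $K_{N}$; neither hides substantive content beyond standard facts about HNN extensions.
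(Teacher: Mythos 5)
Your proposal is correct and follows essentially the same route as the paper: the author proves Theorem~\ref{theorem:main-gr} precisely by observing that the proof of Theorem~\ref{theorem:main} only uses the amalgamated product decomposition of $\Ker\phi$, finite generation of $H$, and the surjectivity of $(\iota^{\pm})_{*}$ on rational $H_{1}$ (the last replacing Lemma~\ref{lemma:Qrhf1}), which is exactly the substitution you carry out. Your explicit use of the group-homology Mayer--Vietoris sequence in place of the topological one is the intended (and only) adjustment.
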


\section*{Acknowledgements}

The author thanks for Eiko Kin for valuable comments on earlier draft of the paper. Also, the author thank for Stefan Friedl for pointing out that the proof of Theorem \ref{theorem:main} requires the hypothesis that $K$ is rationally homologically fibered. This work was supported by JSPS KAKENHI Grant Number 15K17540.

\end{document}